\def\Sing{\mbox{Sing}}
\def\p{\partial}
\def\R{\mathbb{R}}
\def\vv<#1>{\langle#1\rangle}
\def\XXint#1#2{\setbox0=\hbox{$#1{#2}{\int}$}{#2}\kern-.5\wd0 }
\def\XXint#1#2#3{{\setbox0=\hbox{$#1{#2#3}{\int}$}
     \vcenter{\hbox{$#2#3$}}\kern-.5\wd0}}
\def\Lip{\mbox{Lip}}
\def\vv<#1>{{\left\langle#1\right\rangle}}
\def\sph{\mathbb{S}}
\newtheorem{thm}{Theorem}[section]
\newtheorem{lem}{Lemma}[section]
\newtheorem{cor}{Corollary}[section]
\theoremstyle{definition}
\newtheorem{defn}{Definition}[section]
\theoremstyle{remark}
\numberwithin{equation}{section}
\begin{document}
\title{Critical points and surjectivity of smooth maps }

\author{Yongjie Shi$^1$}
\address{Department of Mathematics, Shantou University, Shantou, Guangdong, 515063, China}
\email{yjshi@stu.edu.cn}
\author{Chengjie Yu$^2$}
\address{Department of Mathematics, Shantou University, Shantou, Guangdong, 515063, China}
\email{cjyu@stu.edu.cn}
\thanks{$^1$Research partially supported by NSF of China with contract no. 11701355. }
\thanks{$^2$Research partially supported by NSF of China with contract no. 11571215.}
\renewcommand{\subjclassname}{%
  \textup{2010} Mathematics Subject Classification}
\subjclass[2010]{Primary 58C25; Secondary 32H02}
\date{}
\keywords{critical point, surjectivity}
\begin{abstract}
Let $f:M^m\to N^n$ be a smooth map between two differential manifolds with $N$ connected, $f(M)$ closed and $f(M)\neq N$. In this short note, we show that either all the points of $M$ are critical points of $f$ or the dimension the collection of all critical points of $f$ is not less than $n-1$. Some consequences of this result for surjectivity of mappings are also presented.
\end{abstract}
\maketitle\markboth{Shi \& Yu}{critical points and surjectivity}
\section{Introduction}
In \cite{LL}, the authors obtained the following interesting result:
\begin{thm}\label{thm-LL}
Let $M^m$ be a smooth manifold and  $f:M\to \R^n$ be a $C^1$-map with $n\geq 2$. If $f$ has finitely many critical points and $f(M)$ is a closed subset of $\R^n$, then $f$ is surjective. In particular, if $M$ is compact, then $f$ has infinitely many critical points.
\end{thm}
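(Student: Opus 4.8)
The plan is to argue by contradiction. Suppose $f$ is \emph{not} surjective, and set $A:=f(M)$, so that $A$ is a nonempty, proper, closed subset of $\R^n$. The entire argument will be organized around the topological boundary $\partial A=A\setminus\mathrm{int}(A)$, and the goal is to show that the finiteness of the critical set forces $\partial A$ to be finite, which is incompatible with $A$ being a proper closed set with nonempty interior when $n\geq 2$.

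First I would record the behavior of $f$ at its regular points. At any regular point $p$ the differential $df_p$ is surjective, so necessarily $m\geq n$, and by the submersion (implicit function) theorem $f$ maps a neighborhood of $p$ onto a neighborhood of $f(p)$; thus $f$ is an open map near each regular point. Since $M$ has positive dimension it is infinite, and deleting the finitely many critical points still leaves regular points; their images lie in $\mathrm{int}(A)$, so in particular $\mathrm{int}(A)\neq\emptyset$.

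The key step is to prove $\partial A$ is finite by showing $\partial A$ consists only of critical values. Let $y\in\partial A$. Because $A$ is closed, $y\in A$, so $y=f(p)$ for some $p\in M$. If $p$ were a regular point, the openness just established would place $y$ in $\mathrm{int}(A)$, contradicting $y\in\partial A$; hence every preimage of $y$ is a critical point and $y$ is a critical value. Therefore $\partial A$ is contained in the image of the critical set, which is finite because there are only finitely many critical points.

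Finally I would invoke $n\geq 2$. Removing a finite set from $\R^n$ with $n\geq 2$ leaves a connected set, so $\R^n\setminus\partial A$ is connected; yet it is the disjoint union of the two open sets $\mathrm{int}(A)$ and $\R^n\setminus A$, both nonempty (the first by the second paragraph, the second because $A$ is proper). This contradiction proves $f$ is surjective. The compact case is then immediate: if $M$ is compact, $f(M)$ is compact and hence closed, so finitely many critical points would make $f$ surjective, which is impossible since $\R^n$ is noncompact; thus the critical set must be infinite. I expect the delicate point to be the inclusion $\partial A\subseteq\{\text{critical values}\}$, since it is precisely there that both the openness of submersions and the closedness hypothesis on $f(M)$ are used; the hypothesis $n\geq 2$ enters only through the connectedness of $\R^n$ minus a finite set, and the argument genuinely fails for $n=1$.
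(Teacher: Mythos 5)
Your proof is correct, and it takes a genuinely different --- and more elementary --- route than the paper does. The paper never argues for this statement directly: it obtains it as a special case of Theorem \ref{thm-main} (through Corollary \ref{cor-surj-sm-1}), whose proof is dimension-theoretic. There, Lemma \ref{lem-key} shows, via the radial projection $x\mapsto x/\|x\|$ onto the sphere, that the boundary of a proper closed set with nonempty interior has dimension at least $n-1$, and Theorem \ref{thm-dim} (a $C^1$ map cannot raise Hausdorff dimension) transfers this lower bound to $f^{-1}(\partial f(M))$, which consists entirely of critical points. You share the paper's pivotal observation --- every point of $\partial f(M)$ is a critical value, because $f(M)$ is closed and $f$ is open near regular points --- but you replace the measure-theoretic apparatus by the connectedness of $\R^n$ minus a finite set together with the decomposition $\R^n\setminus\partial A=\mathrm{int}(A)\sqcup(\R^n\setminus A)$; this is essentially the trick of \cite{LL}, of which the present statement is the main theorem. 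As for what each approach buys: yours is short and self-contained, and since $\R^n$ minus a \emph{countable} set is still connected when $n\geq 2$, it extends verbatim to countably many critical points, recovering Corollary \ref{cor-surj-sm-1} while sidestepping the closure problem the paper attributes to the original argument of \cite{LL}. The paper's heavier machinery yields the strictly stronger conclusion of Theorem \ref{thm-main} --- the critical set has dimension at least $n-1$ --- which no connectedness argument can detect, and which is what feeds the general target manifolds and the holomorphic applications. (One pedantic point, shared with the statement itself: your argument presumes $M$ nonempty of positive dimension, which is needed anyway for the theorem to be true as literally stated.)
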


In this paper, by applying a similar trick as in the proof of Theorem \ref{thm-LL} in \cite{LL} and the observation that boundary points of $f(M)$ must be critical values, we are able to obtain a stronger conclusion:
\begin{thm}\label{thm-main}
Let $M$ be an $m$-dimensional differential manifold, $N$ be a connected $n$-dimensional differential manifold, and  $f:M\to N$ be a $C^1$-map  with  $f(M)$ closed and $f(M)\neq N$. Then either all points of $M$ are critical points of $f$ or the dimension of the collection of all critical points of $f$ is not less than $n-1$.
\end{thm}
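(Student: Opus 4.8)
The plan is to prove the dichotomy directly: assuming $f$ has at least one regular point, I will show its critical set has Hausdorff dimension at least $n-1$. Write $C$ for the set of critical points and $R=M\setminus C$ for the regular points; if $R=\emptyset$ the first alternative holds, so assume $R\neq\emptyset$. Since a $C^1$ submersion is an open map, $f(R)$ is a nonempty open subset of $N$ contained in $f(M)$, whence $\Int f(M)\neq\emptyset$. Next I record the observation flagged in the introduction: every boundary point $q\in\partial f(M)$ is a critical value. Indeed, as $f(M)$ is closed we have $q\in f(M)$, so $f^{-1}(q)\neq\emptyset$; if every preimage of $q$ were a regular point, the local submersion theorem would make $f$ carry a neighborhood of each such preimage onto a neighborhood of $q$, forcing $q\in\Int f(M)$ and contradicting $q\in\partial f(M)$. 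Hence $\partial f(M)\subseteq f(C)$.

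The core geometric step is to bound $\dim\partial f(M)$ from below. Setting $A=\Int f(M)$, $B=N\setminus f(M)$ and $S=\partial f(M)$, one gets a disjoint decomposition $N=A\sqcup B\sqcup S$ with $A,B$ open and nonempty ($B\neq\emptyset$ because $f(M)\neq N$). Because $A$ is a nonempty proper open subset of the connected manifold $N$, its boundary is nonempty, and $\partial A\subseteq\overline A\cap\overline B\cap S$; pick $q\in\partial A$. Choosing a connected coordinate chart $W\ni q$, both $W\cap A$ and $W\cap B$ are nonempty, and $W\setminus S=(W\cap A)\sqcup(W\cap B)$ exhibits $S\cap W$ as a closed set separating the Euclidean region $W$. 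By the classical separation theorem of dimension theory, a closed set separating a domain in $\R^n$ has topological dimension at least $n-1$; therefore $\dim_{\mathrm{top}}\partial f(M)\geq n-1$.

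Finally I transfer this bound from the critical values back to the critical points. Since topological dimension never exceeds Hausdorff dimension, $\dim_H\partial f(M)\geq n-1$, and as $\partial f(M)\subseteq f(C)$ we get $\dim_H f(C)\geq n-1$. The map $f$ is $C^1$, hence locally Lipschitz, so in charts $f|_C$ is a countable union of Lipschitz maps; since Lipschitz maps do not raise Hausdorff dimension and Hausdorff dimension is countably stable, $\dim_H f(C)\leq\dim_H C$. Combining, $\dim_H C\geq n-1$, the desired conclusion.

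I expect the main obstacle to be the separation step: justifying rigorously that $\partial f(M)$ locally separates $N$ and invoking the correct form of the dimension-theoretic separation theorem, reducing the manifold statement to the Euclidean one via a chart. A secondary point requiring care is the passage from critical values to critical points, where the $C^1$ (local Lipschitz) hypothesis is essential—mere continuity of $f$ would allow the image to have strictly larger dimension—so the proof must read ``dimension'' as Hausdorff dimension for this inequality to hold.
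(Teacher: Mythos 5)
Your proof is correct, and its overall skeleton matches the paper's: a regular point gives $\Int f(M)\neq\emptyset$ by the local submersion theorem; closedness of $f(M)$ and $f(M)\neq N$ make $\p f(M)$ a nonempty set of critical values; one proves $\dim \p f(M)\geq n-1$; and the bound transfers to the critical set $C$ because $C^1$ maps do not raise Hausdorff dimension (the paper's Theorem \ref{thm-dim}; the paper phrases this as $\dim f^{-1}(\p f(M))\geq n-1$, you as $\dim C\geq \dim f(C)\geq \dim \p f(M)$ --- the same estimate). Where you genuinely diverge is the key step $\dim \p f(M)\geq n-1$, which the paper isolates as Lemma \ref{lem-key} and proves by an elementary radial-projection trick: homogeneity of manifolds is used to bring an interior point of $f(M)$ and an exterior point into a single chart; every segment from the exterior point (the chart's center) to a small ball about the interior point meets the boundary, so the radial projection onto $\sph^{n-1}$, which cannot raise dimension by Theorem \ref{thm-dim} again, maps $\p f(M)$ onto a set containing an open subset of $\sph^{n-1}$. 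You instead show that $S=\p f(M)$ separates a connected chart $W$ about a point of $\p(\Int f(M))$ into the nonempty open pieces $W\cap \Int f(M)$ and $W\setminus f(M)$, and then invoke the Hurewicz--Wallman separation theorem together with Szpilrajn's inequality $\dim_{\mathrm{top}}\leq \dim_H$. Both routes are valid: yours is purely local, avoids the homogeneity trick, and yields the formally stronger topological-dimension bound, but it imports two nontrivial theorems of classical dimension theory, whereas the paper's lemma is essentially self-contained (nothing beyond the Lipschitz bound on Hausdorff measure), which keeps the whole note elementary. One point you should spell out: $W\cap B\neq\emptyset$ for every neighborhood $W$ of $q\in\p A$, since otherwise $W\subseteq f(M)$ would force $W\subseteq \Int f(M)=A$ and hence $q\in A$, a contradiction; this one-liner is what legitimizes the separation claim.
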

As a consequence, we have the following criterion for surjectivity of differentiable maps:
\begin{cor}\label{cor-surj-sm}
Let $M$ be an $m$-dimensional differential manifold, $N$ be a connected $n$-dimensional differential manifold, and  $f:M\to N$ be a $C^1$-map  such that
\begin{enumerate}
\item $f$ has at least one regular point;
\item $f(M)$ is closed in $N$;
\item the dimension of the collection of all critical points is less than $n-1$.
\end{enumerate}
Then, $f$ is surjective.
\end{cor}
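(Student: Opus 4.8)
The plan is to derive this corollary directly from Theorem \ref{thm-main} by contraposition. The three hypotheses are precisely calibrated to exclude both alternatives in the dichotomy of Theorem \ref{thm-main}, so the argument should be a short logical deduction rather than any new geometric analysis. I would begin by assuming, for contradiction, that $f$ is \emph{not} surjective, i.e.\ that $f(M)\neq N$. Together with hypothesis (2), that $f(M)$ is closed in $N$, this places us exactly in the setting where Theorem \ref{thm-main} applies.

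Having invoked Theorem \ref{thm-main}, I would then confront its conclusion: either every point of $M$ is a critical point of $f$, or the dimension of the set of critical points is at least $n-1$. The first alternative is ruled out immediately by hypothesis (1), since $f$ is assumed to possess at least one regular point, so not all points of $M$ can be critical. The second alternative is ruled out by hypothesis (3), which asserts that the dimension of the collection of critical points is strictly less than $n-1$. As both disjuncts of the theorem's conclusion are contradicted, the assumption $f(M)\neq N$ is untenable, and hence $f(M)=N$, establishing surjectivity.

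I do not anticipate a genuine obstacle here, since all the substantive content---in particular the observation that boundary points of $f(M)$ are critical values, and the dimension-counting behind the $n-1$ bound---has already been absorbed into the proof of Theorem \ref{thm-main}. The only point requiring mild care is bookkeeping with the two alternatives: one must verify that hypotheses (1) and (3) negate the two cases in the correct pairing, and that the contrapositive is applied to the statement ``$f(M)$ closed and $f(M)\neq N$'' as a conjunction, with closedness supplied throughout by hypothesis (2). Once this is arranged, the corollary follows at once.
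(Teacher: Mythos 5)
Your proof is correct and matches the paper's approach exactly: the paper presents this corollary as an immediate consequence of Theorem \ref{thm-main}, obtained precisely by the contrapositive argument you give, with hypothesis (1) excluding the all-critical alternative and hypothesis (3) excluding the dimension-at-least-$n-1$ alternative. No gaps; the paper simply omits writing out this deduction.
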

Note that assumption (1) in Corollary \ref{cor-surj-sm} is necessary for $f$ to be surjective by Sard's theorem. A direct consequence of Corollary \ref{cor-surj-sm} is the following slightly stronger version of Theorem \ref{thm-LL}. It seems that the original argument in \cite{LL} does not work for the proof of this stronger version since an infinite countable set of points may generate the whole space by taking closure.

\begin{cor}\label{cor-surj-sm-1}
Let $M$ be an $m$-dimensional differential manifold, $N$ be a connected $n$-dimensional differential manifold with $n\geq 2$, and  $f:M\to N$ be a $C^1$-map  such that
\begin{enumerate}
\item $f(M)$ is closed in $N$;
\item $f$ has only countably many critical points.
\end{enumerate}
Then, $f$ is surjective.
\end{cor}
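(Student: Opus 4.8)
The plan is to deduce Corollary 1.3 directly from Corollary 1.2, so the work reduces to verifying the three hypotheses of Corollary 1.2 under the present assumptions.

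First I would dispose of the regularity hypothesis. Suppose toward a contradiction that $f$ has no regular point, i.e.\ that every point of $M$ is a critical point of $f$. Then the image $f(M)$ is contained in the set of critical values, which by Sard's theorem has measure zero in $N$; in particular $f(M)\neq N$. Combined with the assumption that $f(M)$ is closed, this puts us exactly in the setting of Theorem 1.2, whose dichotomy we will invoke: since we are assuming all points are critical, the dimension of the critical set would have to be $m$, but this alone is not yet a contradiction, so instead I would argue more directly. The cleaner route is: if $f$ had \emph{no} regular point, then the critical set is all of $M$, which is at most countable only if $M$ itself is at most countable; since a (positive-dimensional) manifold is uncountable, and the case $\dim M = 0$ is excluded once we note $f$ could not then be surjective onto an $n\geq 2$ manifold anyway, we conclude $f$ must possess at least one regular point. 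This establishes hypothesis (1).

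Next, hypothesis (2) of Corollary 1.2 is simply hypothesis (1) here, so nothing is needed. For hypothesis (3), I must check that a countable critical set has dimension less than $n-1$. Since $n\geq 2$ we have $n-1\geq 1$, and a countable set is zero-dimensional in any reasonable sense of dimension (topological, Hausdorff, or covering dimension), so its dimension is $0 < n-1$. This is where one must be slightly careful about \emph{which} notion of dimension is intended in Theorem 1.2 and Corollary 1.2; I would make explicit that "dimension" here refers to a notion under which every countable set has dimension $0$ (e.g.\ Hausdorff dimension or the small inductive dimension), which indeed is the sense in which the preceding results are proved.

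With all three hypotheses of Corollary 1.2 verified, its conclusion yields that $f$ is surjective, completing the proof. The main obstacle I anticipate is purely bookkeeping rather than conceptual: ensuring that the appeal to Corollary 1.2 is legitimate requires the regular-point hypothesis, and the only subtlety is ruling out the degenerate possibility that \emph{all} points of $M$ are critical. The remark in the excerpt---that the original argument of \cite{LL} fails because a countable set can have dense closure---signals precisely that one should \emph{not} try to argue via closures or generic-point arguments, but should instead feed the countability into the dimension-based hypothesis (3) of Corollary 1.2, where countability translates cleanly into dimension $0$.
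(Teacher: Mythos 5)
Your proposal is correct in substance and follows exactly the route the paper intends: the paper gives no separate proof of this corollary, presenting it as a direct consequence of Corollary \ref{cor-surj-sm}, and your work consists precisely of verifying that corollary's three hypotheses. Your verification of hypothesis (3) is right and matches the paper's framework: a countable set has $\mathcal{H}^s$-measure zero in every chart for every $s>0$, hence has dimension $0$ under the paper's intrinsic Hausdorff-type definition, and $0<n-1$ exactly because $n\geq 2$. Your caution about which notion of dimension is in play is well placed and resolves correctly. (Your abandoned first attempt at the regular-point hypothesis, via Sard's theorem, would in any case be delicate for a map that is merely $C^1$ with $m>n$; but since you discarded it, nothing hinges on it.)

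The one step that does not hold up is your treatment of the case in which every point of $M$ is critical. Your countability argument correctly reduces this to $\dim M=0$, but you then ``exclude'' that case on the grounds that $f$ could not be surjective onto an $n\geq 2$ manifold anyway. That is backwards: one cannot discard an admissible instance of the hypotheses because the conclusion fails in it. What your observation actually shows is that for $m=0$ the corollary as literally stated is false --- a constant map from a one-point manifold into $\R^2$ has closed image and countably many critical points, yet is not surjective. (The same degenerate case already defeats Theorem \ref{thm-LL} as quoted, and the paper passes over it silently; its implicit appeal to Corollary \ref{cor-surj-sm} needs the regular point just as yours does.) The honest formulation is therefore: for $m\geq 1$ your proof is complete, since $M$ is then uncountable, a countable critical set cannot exhaust it, and all three hypotheses of Corollary \ref{cor-surj-sm} hold; for $m=0$ no proof exists, and that case must be ruled out by hypothesis rather than by argument.
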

Another direct corollary is the following  stronger version of Theorem \ref{thm-LL} for compact source and noncompact  target.
\begin{cor}
Let $M$ be a compact manifold, $N^n$ be an $n$-dimensional connected noncompact manifold and $f:M\to N$ be a $C^1$-map. Then either all points of $M$ are critical points of $f$ or the dimension of the collection of all critical points of $f$ is not less than $n-1$.
\end{cor}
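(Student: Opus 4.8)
The plan is simply to verify the two hypotheses of Theorem \ref{thm-main} and then invoke that theorem directly; the present statement is the specialization in which the two analytic conditions there are replaced by the single topological assumption that the target is noncompact. First I would observe that since $M$ is compact and $f$ is continuous (being $C^1$), the image $f(M)$ is a compact subset of $N$. Because $N$ is a manifold and hence Hausdorff, every compact subset of $N$ is closed; thus $f(M)$ is closed in $N$, which establishes the first hypothesis of Theorem \ref{thm-main}.

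Next I would check that $f(M)\neq N$. Suppose for contradiction that $f(M)=N$. Then $N$, being the continuous image of the compact set $M$, would itself be compact, contradicting the assumption that $N$ is noncompact. Hence $f(M)\neq N$, which establishes the second hypothesis. Note that $N$ is assumed connected, so the remaining standing hypothesis of Theorem \ref{thm-main} is also in force.

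With all hypotheses verified, Theorem \ref{thm-main} applies and immediately yields the desired dichotomy: either all points of $M$ are critical points of $f$, or the dimension of the collection of all critical points of $f$ is at least $n-1$. There is no genuine obstacle in this argument; the only point worth emphasizing is that the noncompactness of $N$ is precisely what forces the compact image $f(M)$ to be a proper subset, thereby supplying the condition $f(M)\neq N$ needed to run the main theorem.
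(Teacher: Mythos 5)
Your proof is correct and is exactly the argument the paper intends: the corollary is stated there as a direct consequence of Theorem \ref{thm-main}, with compactness of $M$ giving that $f(M)$ is closed (compact in Hausdorff) and noncompactness of $N$ giving $f(M)\neq N$. Nothing further is needed.
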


Note that the collection of all  critical points of a holomorphic map $f$ between complex manifolds is a proper analytic subvariety and  must be at least of real codimension 2 when $f$ has regular points. We have the following criterion for surjectivity of holomorphic maps.
\begin{cor}\label{cor-surj-holo}
Let $M$ and $N$ be two connected complex manifolds of complex dimension $n$, and $f:M\to N$ be a holomorphic map such that
\begin{enumerate}
\item $f$ has at least a regular point;
\item $f(M)$ is closed in $N$.
\end{enumerate}
Then, $f$ is surjective.
\end{cor}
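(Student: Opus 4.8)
The plan is to deduce this corollary directly from Corollary \ref{cor-surj-sm} by passing to the underlying real structures. First I would regard $M$ and $N$ as smooth real manifolds, each of real dimension $2n$, and note that $N$ is connected and that $f$, being holomorphic, is in particular a $C^1$-map. With the target's \emph{real} dimension playing the role of ``$n$'' in Corollary \ref{cor-surj-sm}, hypotheses (1) and (2) there are exactly the two hypotheses assumed here, namely that $f$ has a regular point and that $f(M)$ is closed in $N$. Thus the whole problem reduces to verifying hypothesis (3): that the dimension of the critical set of $f$ is strictly less than $2n-1$.

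The heart of the argument is to show that the critical set is a proper analytic subvariety. For a holomorphic map between equidimensional complex manifolds, a point $p$ is a critical point of $f$ viewed as a smooth map precisely when the real differential $df_p$ fails to be surjective, and this occurs exactly when the holomorphic Jacobian determinant $\det J_f(p)$ vanishes. In local holomorphic charts this determinant is a holomorphic function, and under a change of charts it is multiplied by the nowhere-vanishing Jacobians of the coordinate transitions; hence its zero locus $C$ is a well-defined analytic subvariety of $M$, independent of the charts chosen. Because $f$ has a regular point we have $C \neq M$, and since $M$ is connected this forces $C$ to be a \emph{proper} analytic subvariety. Consequently $C$ has complex codimension at least one, i.e. $\dim_{\R} C \leq 2n - 2$.

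Finally I would combine these observations: since $2n - 2 < 2n - 1$, hypothesis (3) of Corollary \ref{cor-surj-sm} is satisfied, and the surjectivity of $f$ follows immediately from that corollary.

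The main point requiring care is the middle step—correctly identifying the smooth (real) critical set with the complex-analytic vanishing locus of $\det J_f$ and confirming that this locus is genuinely a proper subvariety of real dimension at most $2n-2$. This is exactly the content of the remark preceding the statement, and once it is in hand the application of Corollary \ref{cor-surj-sm} is routine; I do not expect any obstacle in the reduction itself.
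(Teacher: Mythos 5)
Your proposal is correct and coincides with the paper's intended argument: the paper proves this corollary precisely by the remark preceding it, namely that the critical set of an equidimensional holomorphic map with a regular point is a proper analytic subvariety of real codimension at least $2$ (the zero locus of the holomorphic Jacobian determinant), and then applying Corollary \ref{cor-surj-sm} with target dimension $2n$. Your verification of hypothesis (3) via the vanishing locus of $\det J_f$ and the identity theorem on the connected manifold $M$ is exactly the content the paper leaves implicit.
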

According to Sard's theorem, the converse of Corollary \ref{cor-surj-holo} is also true. In fact, the last conclusion can be extended to complex analytic spaces by using a similar argument as in the proof of Theorem \ref{thm-main}.
\begin{thm}\label{thm-surj-holo-1}
Let $X$ and $Y$ be two irreducible $n$-dimensional complex analytic spaces, and $f:X\to Y$ be a holomorphic map such that
\begin{enumerate}
\item $f$ has at least a regular point. More precisely, there is a smooth point $p$ of $X$ such that $f(p)$ is a smooth point of $Y$ and $f$ is nondegenerate at $p$;
\item $f(X)$ is closed.
\end{enumerate}
Then, $f$ is surjective.
\end{thm}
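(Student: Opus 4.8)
The plan is to argue by contradiction and to transplant, onto the smooth loci of $X$ and $Y$, the observation underlying Theorem \ref{thm-main} that every boundary point of the image is a critical value; when $X$ and $Y$ are smooth this is exactly the argument behind Corollary \ref{cor-surj-holo}. Write $X_{\mathrm{reg}}, Y_{\mathrm{reg}}$ for the smooth loci and $X_{\mathrm{sing}}, Y_{\mathrm{sing}}$ for the singular loci. Since $X$ and $Y$ are irreducible, $Y_{\mathrm{reg}}$ is a connected complex manifold of real dimension $2n$, $X_{\mathrm{reg}}$ is connected, and $X_{\mathrm{sing}}, Y_{\mathrm{sing}}$ are analytic subsets of complex dimension at most $n-1$. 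Suppose $f$ is not surjective. By assumption (1) and the holomorphic inverse function theorem, $f$ restricts to a biholomorphism from a neighborhood of $p$ onto a neighborhood of $f(p)$, so $f(p)$ lies in the interior $U := \Int f(X)$, which is therefore nonempty. As $f(X)$ is closed, $f(X)\neq Y$, and $Y$ is connected, the set $E := Y\setminus f(X)$ is open and nonempty, and one has $Y\setminus \partial f(X) = U\sqcup E$. Intersecting with the dense connected manifold $Y_{\mathrm{reg}}$ exhibits $Y_{\mathrm{reg}}\setminus S$, where $S := \partial f(X)\cap Y_{\mathrm{reg}}$, as a disjoint union of the two nonempty open sets $U\cap Y_{\mathrm{reg}}$ and $E\cap Y_{\mathrm{reg}}$; thus the closed set $S$ separates $Y_{\mathrm{reg}}$.

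The heart of the matter is to bound the size of $S$. First I would show that $S$ consists of critical values, namely $S\subseteq f(X_{\mathrm{sing}})\cup f(C)$, where $C$ denotes the critical locus of the holomorphic map $f$ restricted to the manifold $X' := X_{\mathrm{reg}}\cap f^{-1}(Y_{\mathrm{reg}})$. Indeed, let $q\in S$; since $f(X)$ is closed, $q\in f(X)$, so $q$ has a preimage $x$. If $q\notin f(X_{\mathrm{sing}})$ then $x\in X_{\mathrm{reg}}$, and since $q\in Y_{\mathrm{reg}}$ we get $x\in X'$; if moreover $q\notin f(C)$, then $df_x$ is invertible and $f$ is a local biholomorphism at $x$, forcing $q\in \Int f(X)=U$ and contradicting $q\in\partial f(X)$. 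Hence $q\in f(X_{\mathrm{sing}})\cup f(C)$.

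Next I would estimate dimensions. The sets $X_{\mathrm{sing}}$ and $C$ have complex dimension at most $n-1$: the former by the structure theory of analytic spaces, the latter because $C$ is a proper analytic subset of the component of $X'$ containing $p$ (it avoids $p$), while on any component of $X'$ on which $df$ is everywhere degenerate the rank theorem makes the image lower dimensional; in every case the corresponding image has real Hausdorff dimension at most $2n-2$, using that holomorphic maps are locally Lipschitz (via local embeddings into complex affine space) and hence do not increase Hausdorff dimension, and that a complex analytic set of complex dimension $k$ has Hausdorff dimension $2k$. Consequently $S$ has Hausdorff, and therefore topological, dimension at most $2n-2$. But a closed subset of topological dimension at most $2n-2$ cannot separate the connected $2n$-dimensional manifold $Y_{\mathrm{reg}}$ (Hurewicz--Wallman), which contradicts the separation established above. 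Therefore $f$ must be surjective.

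I expect the main obstacle to be the dimension control of the critical-value set $f(C)$ in the presence of singularities: one must obtain a bound on Hausdorff dimension rather than merely on Lebesgue measure, since a measure-zero set of topological dimension $2n-1$ could still separate. This forces careful treatment of the components of $X'$ on which $f$ degenerates identically, together with a clean use of the local Lipschitz property of holomorphic maps on singular analytic spaces. The separation-versus-dimension step, classical for manifolds, is precisely what makes the reduction to the connected manifold $Y_{\mathrm{reg}}$ essential, since $Y$ itself is only an analytic space.
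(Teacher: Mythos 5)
Your proposal is correct, but it reaches the contradiction by a genuinely different mechanism from the paper's. The paper works on the \emph{source} side: it applies its key Lemma~\ref{lem-key} (the elementary radial--projection argument) to the proper closed set $f(X)\cap Y^*$ with nonempty interior inside the connected manifold $Y^*$, obtaining $\dim \p(f(X)\cap Y^*)\geq 2n-1$ in the Hausdorff sense; it then pulls this back via Theorem~\ref{thm-dim} to get $\dim f^{-1}(\p(f(X)\cap Y^*))\geq 2n-1$, discards $\Sing X$ (of dimension at most $2n-2$), and contradicts the fact that all remaining preimage points are critical points of $f:X^*\cap f^{-1}(Y^*)\to Y^*$, whose critical locus is a proper subvariety of dimension at most $2n-2$. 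You instead stay in the \emph{target}: you show that $\p f(X)\cap Y^*$ separates $Y^*$, bound the Hausdorff dimension of the critical-value set $f(\Sing X)\cup f(C)$ containing it by $2n-2$, and then invoke Szpilrajn's inequality (topological dimension is at most Hausdorff dimension) together with the Hurewicz--Wallman theorem that a closed set of topological dimension at most $2n-2$ cannot separate a connected $2n$-manifold. In effect you replace the paper's Lemma~\ref{lem-key} by classical topological dimension theory: the paper's route is self-contained (everything reduces to the Lipschitz behaviour of Hausdorff measure), while yours imports heavier classical results but localizes all the dimension counting on the target, which is conceptually clean. One simplification and one observation. The care you take over components of $X':=X^*\cap f^{-1}(Y^*)$ on which $df$ degenerates identically is unnecessary: $f^{-1}(\Sing Y)\cap X^*$ is a \emph{proper} analytic subset of the connected manifold $X^*$ (proper because $f(p)\in Y^*$), so its complement $X'$ is still connected, and since $p\in X'$ is a regular point, the critical locus $C$ is automatically a proper analytic subset of all of $X'$; this disposes of the rank-theorem stratification entirely. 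On the other hand, the paper's own proof silently relies on this same connectedness when it asserts that the critical locus is a ``proper subvariety'' of $X^*\cap f^{-1}(Y^*)$ --- without connectedness that assertion could fail in exactly the way you guarded against --- so on this point your write-up is actually more careful than the paper's.
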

In the last result, when assumption (2) is replaced by a stronger assumption that $f$ is proper, the conclusion can be drawn directly from Remmert's proper mapping theorem (see \cite{D}).

The proofs of the results above are quite elementary. The key step is the following simple lemma.
\begin{lem}\label{lem-key}
Let $M^n$ be a connected differential manifold and $E$ be a proper closed subset of $M$ with nonempty interior. Then the dimension of $\p E$ is not less than $n-1$.
\end{lem}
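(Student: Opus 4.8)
The plan is to reduce the lemma to the classical separation theorem of dimension theory: a closed subset of $\R^n$ that separates two points of its complement has covering dimension at least $n-1$ (Mazurkiewicz; see Hurewicz--Wallman, \emph{Dimension Theory}). All the lemma then requires is to produce, near a suitable point, a Euclidean ball that $\p E$ separates, the surrounding steps being elementary point-set topology. First I would note that $\p E\neq\emptyset$: since $\p E=E\setminus\Int E$ is closed, its vanishing would make $E$ both open and closed, and as $E\neq\emptyset$ and $M$ is connected this would force $E=M$, contradicting $E\neq M$.

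The geometric heart is the claim that $\overline{\Int E}\cap\overline{M\setminus E}\neq\emptyset$. Since $\p E\subseteq\overline{M\setminus E}$ one has $\overline{\Int E}\cup\overline{M\setminus E}=M$; were these two closed sets disjoint they would exhibit $M$ as a union of two nonempty clopen pieces (both nonempty because $\Int E\neq\emptyset$ and $E\neq M$), contradicting connectedness. Fixing a point $p$ in this intersection, every neighborhood of $p$ meets both the open set $\Int E$ and the open set $M\setminus E$.

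Next I would localize at $p$. Choose a coordinate chart $\phi\colon W\to\R^n$ with $W$ a connected coordinate ball containing $p$, so that $W$ is homeomorphic to $\R^n$. Pick $a\in\Int E\cap W$ and $b\in(M\setminus E)\cap W$. Any path in $W$ from $a$ to $b$ has connected image meeting both $\Int E$ and $M\setminus E$, and hence must meet $\p E$; otherwise the image would decompose into two disjoint nonempty relatively open subsets. Thus $a$ and $b$ lie in different components of the open set $W\setminus\p E$, i.e.\ the set $\p E\cap W$, which is closed in $W$, separates the ball. Transporting through $\phi$ and invoking the separation theorem gives $\dim(\p E\cap W)\ge n-1$, whence $\dim\p E\ge n-1$ by monotonicity of dimension.

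The only nonelementary ingredient is the cited separation theorem, and that is where the real content sits; I expect this separation-to-dimension passage to be the main obstacle, the rest being routine. If one instead interprets $\dim$ as Hausdorff dimension, the ingredient can be replaced by a self-contained projection estimate: assuming $\dim_H(\p E\cap W)<n-1$, the cone $\{a+t(c-a):c\in\p E\cap W,\ t\in\R\}$ is a locally Lipschitz image of a set of dimension below $n$, so it is Lebesgue-null, and similarly for the cone based at $b$; any point $z\in W$ outside both cones then yields a broken-line path $a\to z\to b$ inside the convex ball $W$ that avoids $\p E$, contradicting the separation just established.
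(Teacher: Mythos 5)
Your proof is correct, but it follows a genuinely different route from the paper's. The paper brings an interior point of $E$ and a point of $E^c$ into a single chart by invoking homogeneity of differential manifolds (a global diffeomorphism moving the interior point next to the chosen exterior point), and then bounds the dimension directly: the radial projection from the exterior point maps $\p E$ onto a full neighborhood in $\sph^{n-1}$, so $\dim \p E\geq n-1$ because $C^1$ maps cannot raise Hausdorff dimension (Theorem \ref{thm-dim}). You instead localize at a point of $\overline{\Int E}\cap\overline{M\setminus E}$, produced by a connectedness argument, which lets you see both $\Int E$ and $M\setminus E$ inside one chart with no homogeneity theorem at all --- a genuine simplification of the setup --- and then you convert separation into a dimension bound. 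One caveat on your main route: the paper's $\dim$ is Hausdorff dimension (defined via charts in Section 2), whereas the Mazurkiewicz/Hurewicz--Wallman theorem bounds the \emph{covering} dimension, so to recover the paper's statement you would also need Szpilrajn's inequality that topological dimension is a lower bound for Hausdorff dimension; your closing paragraph repairs exactly this, since the cone argument is a correct, self-contained Hausdorff-dimension proof, and it is essentially the contrapositive of the paper's projection argument (if $\p E\cap W$ were of dimension below $n-1$, its cones from $a$ and $b$ would be Lebesgue-null, so a generic broken segment from $a$ to $b$ would dodge $\p E$, contradicting the separation you established). As for what each approach buys: the paper's proof is shorter and stays entirely within its own Lipschitz-image machinery, while your main route, at the cost of citing classical dimension theory, actually proves the stronger conclusion that even the topological dimension of $\p E$ is at least $n-1$.
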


The rest of this paper is organized as follows: in Section 2, we give some preliminaries about an intrinsic theory for dimension of subsets of differential manifolds and complex analytic space; In Section 3, we prove Lemma \ref{lem-key} , Theorem \ref{thm-main} and Theorem \ref{thm-surj-holo-1}.
\section{dimension of subsets}
By Whitney's embedding theorem, we know that every differential manifold $M$ can be embedded into some Euclidean space $\R^N$. So, for a subset $E$ of $M$, we can define the dimension of $E$ as the Hausdorff dimension of $E$ in $\R^N$. It is not hard to check that this definition of the dimension of $E$ is independent of the embedding and so is well defined. This is the ambient view of dimension of subsets in a differential manifold.

On the other hand, by combining the idea of defining subsets of measure zero in a differential manifold without given a specific measure (see \cite{GP,Z}) and the definition of Hausdorff dimension of a subset in Euclidean spaces (see \cite{E}), we can define the dimension of a subset of a differential manifold intrinsically in the following way. We believe that this is well known for experts. However, we can not find references for this intrinsic definition of dimensions for subsets of differential manifolds. So, we give a short description on this.

\begin{defn}
Let $M^n$ be a differential  manifold and $\mathcal H^s$ be the $s$-dimensional Hausdorff measure on $\R^n$. A subset $E\subset M$ is said to be of $\mathcal H^s$-measure zero if for any coordinate chart $(U,\varphi)$ of $M$, $$\mathcal H^s(\varphi(E\cap U))=0.$$
\end{defn}

By the fact (see \cite{E}) that
\begin{equation}
\mathcal H^s(f(A))\leq (\Lip(f))^s\mathcal H^s(A)
\end{equation}
for any $A\subset \R^m$ and Lipschitz map $f:\R^m\to \R^n$, where
\begin{equation}
\Lip(f)=\sup_{x\neq y}\frac{\|f(x)-f(y)\|}{\|x-y\|},
\end{equation}
it is not hard to show the following:
\begin{thm}\label{thm-measure-zero}
Let $M$ and $N$ be two differential manifolds, $f:M\to N$ be a $C^1$-map and $A\subset M$ be of $\mathcal H^s$-measure zero. Then, $f(A)$ is also of $\mathcal H^s$-measure zero.
\end{thm}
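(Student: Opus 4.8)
The plan is to reduce the statement to the Euclidean Lipschitz estimate $(2.1)$ by passing to local coordinates, using two elementary facts: a $C^1$-map is locally Lipschitz, and the class of $\mathcal H^s$-measure zero sets is closed under countable unions (by countable subadditivity of Hausdorff measure). The difficulty that the global Lipschitz constant of $f$ may be infinite is circumvented by working on precompact convex coordinate pieces, where the mean value inequality supplies a finite Lipschitz bound and $(2.1)$ becomes applicable.

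First I would fix an arbitrary coordinate chart $(V,\psi)$ of $N$, so that the goal becomes $\mathcal H^s(\psi(f(A)\cap V))=0$. Writing $W=f^{-1}(V)$, which is open in $M$, I would cover $W$ by countably many coordinate charts $(U_i,\varphi_i)$ chosen so that each $\varphi_i(U_i)$ is an open ball of $\R^m$ and the local representative $g_i:=\psi\circ f\circ\varphi_i^{-1}$ extends to a $C^1$-map on a neighborhood of the compact closed ball $\overline{\varphi_i(U_i)}$ (shrinking each chart if necessary); second countability of $M$ guarantees that such a countable cover exists. For each $i$ the differential of $g_i$ is then bounded on the convex set $\varphi_i(U_i)$, so the mean value inequality shows that $g_i\colon\varphi_i(U_i)\to\psi(V)\subset\R^n$ is Lipschitz with some finite constant $L_i$.

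Next I would invoke the hypothesis: applied to the chart $(U_i,\varphi_i)$, the assumption that $A$ is of $\mathcal H^s$-measure zero gives $\mathcal H^s(\varphi_i(A\cap U_i))=0$. Since $\psi(f(A\cap U_i))=g_i(\varphi_i(A\cap U_i))$, the estimate $(2.1)$ yields $\mathcal H^s(\psi(f(A\cap U_i)))\leq L_i^s\,\mathcal H^s(\varphi_i(A\cap U_i))=0$. Finally, from $W=\bigcup_i U_i$ one has $f(A)\cap V=f(A\cap W)=\bigcup_i f(A\cap U_i)$, whence
$$\psi(f(A)\cap V)=\bigcup_i\psi(f(A\cap U_i))$$
is a countable union of $\mathcal H^s$-null sets and therefore itself $\mathcal H^s$-null. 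As the chart $(V,\psi)$ was arbitrary, $f(A)$ is of $\mathcal H^s$-measure zero.

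The only genuinely delicate points are the two reductions, and I expect the passage from $C^1$ to locally Lipschitz to be the main (though still routine) obstacle: it is precisely here that one must restrict to convex coordinate pieces with compactly contained closures so that the differential is bounded and the mean value inequality applies. The availability of a countable cover is the companion ingredient, needed to recombine the local conclusions via subadditivity; it is also exactly the reason the definition of $\mathcal H^s$-measure zero is phrased chart by chart rather than through a single global estimate.
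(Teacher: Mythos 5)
Your proof is correct and follows exactly the route the paper intends: the paper offers no detailed argument for this theorem, merely asserting that it follows from the Lipschitz estimate $(2.1)$, and your chart-by-chart reduction (local Lipschitz bounds via the mean value inequality on precompact convex coordinate pieces, then countable subadditivity) is precisely the standard fleshing-out of that sketch.
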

Note that the Hausdorff dimension of a subset $A$ in an Euclidean space is defined as
\begin{equation}
\inf\{s>0\ |\ \mathcal H^s(A)=0.\}.
\end{equation}
Imitating this, we define the dimension of a subset of a differential manifold as follows.
\begin{defn}
Let $M$ be a differential manifold and $E\subset M$. Define the dimension of $E$ as
\begin{equation}
\dim E=\inf\{s>0\ |\ E\ \mbox{is of $\mathcal H^s$-measure zero.}\}.
\end{equation}
\end{defn}
By Theorem \ref{thm-measure-zero}, one has the following straight forward conclusion.
\begin{thm}\label{thm-dim}
Let $M$ and $N$ be two differential manifolds, $f:M\to N$ be a $C^1$-map and $E\subset M$. Then, $\dim f(E)\leq \dim E$.
\end{thm}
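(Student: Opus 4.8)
The plan is to reduce everything to Theorem~\ref{thm-measure-zero}, which already transports the property of being of $\mathcal{H}^s$-measure zero across a $C^1$-map; the only additional ingredient needed is the elementary monotonicity of Hausdorff null-sets in the scale parameter $s$.

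First I would record the following standard fact about Hausdorff measure in Euclidean space (see \cite{E}): if $A\subset\R^k$ satisfies $\mathcal{H}^s(A)=0$ and $s'>s$, then $\mathcal{H}^{s'}(A)=0$. This follows at once from the estimate $\mathcal{H}^{s'}_\delta(A)\le \delta^{s'-s}\mathcal{H}^s_\delta(A)$ for the $\delta$-premeasures, since every covering set of diameter at most $\delta$ contributes $(\mathrm{diam})^{s'}=(\mathrm{diam})^{s'-s}(\mathrm{diam})^{s}\le \delta^{s'-s}(\mathrm{diam})^{s}$, and one then lets $\delta\to 0$. Applying this chart by chart yields the intrinsic counterpart: if $E\subset M$ is of $\mathcal{H}^s$-measure zero and $s'>s$, then $E$ is of $\mathcal{H}^{s'}$-measure zero, because for every chart $(U,\varphi)$ we have $\mathcal{H}^s(\varphi(E\cap U))=0$ and hence $\mathcal{H}^{s'}(\varphi(E\cap U))=0$. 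Consequently the set $S_E=\{s>0\mid E\text{ is of }\mathcal{H}^s\text{-measure zero}\}$ is an upper ray whose infimum is, by definition, $\dim E$.

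Next I would fix any $s>\dim E$. Since $\dim E=\inf S_E$, there is some $t$ with $\dim E\le t<s$ such that $E$ is of $\mathcal{H}^t$-measure zero; by the monotonicity just established, $E$ is then of $\mathcal{H}^s$-measure zero. Now Theorem~\ref{thm-measure-zero} applies directly and gives that $f(E)$ is of $\mathcal{H}^s$-measure zero as well, so $s\in S_{f(E)}$ and therefore $\dim f(E)\le s$. As $s>\dim E$ was arbitrary, letting $s$ decrease to $\dim E$ yields $\dim f(E)\le \dim E$, which is the assertion.

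I do not expect a genuine obstacle: Theorem~\ref{thm-measure-zero} carries all of the analytic content, and what remains is the bookkeeping of passing from ``null at one level'' to ``null at every higher level'' and then back through the two infima. The one point that needs care is the strict-versus-nonstrict inequality in the infimum, which is exactly why I would argue for every $s$ strictly above $\dim E$ and take the limit only at the final step.
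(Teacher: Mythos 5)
Your proposal is correct and coincides with the paper's intended argument: the paper gives no written proof, stating the result as a ``straight forward conclusion'' of Theorem~\ref{thm-measure-zero}, and your reduction---monotonicity of Hausdorff null sets in $s$, then passing through the two infima via an arbitrary $s>\dim E$---is exactly the bookkeeping the paper leaves implicit.
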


Note that the definition of complex analytic spaces (see \cite{W}) has a similar feature with the definition of differential manifolds using local charts. So, one can define the dimension of a general subset of a complex analytic space similarly , and Theorem \ref{thm-measure-zero} and \ref{thm-dim} remain true in this category.
\section{Proof of results}
We first prove Lemma \ref{lem-key}
\begin{proof}[Proof of Lemma \ref{lem-key}]
Let $p$ be an interior point of $E$ and $q\in E^c$. Let $(U,\varphi)$ be a coordinate chart at $q$ such that $\varphi(q)=o$ and $\varphi(U)=B_o(2)$ where $o$ is the origin of $\R^n$ and $B_o(2)$ is the ball of radius $2$ centered at $o$. Let $p'\in U$ be such that $\varphi(p')=e_1\in \R^n$. Here $e_1=(1,0,\cdots,0)$. By homogeneity of  differential manifolds (see \cite{Z}), there is a diffeomorphism $\psi:M\to M$ such that $\psi(p)=p'$ and $\psi (q)=q$.

Let $\pi:\R^{n}\setminus\{o\}\to \sph^{n-1}$ be such that $\pi(x)=\frac{x}{\|x\|}$. It is clear that $e_1$ is an interior point of $\varphi(\psi(E)\cap U)$. Suppose that $B_{e_1}(\delta)\subset \varphi(\psi(E)\cap U)$ for some $\delta>0$. Then, on each line segment joining $x\in B_{e_1}(\delta)$ and $o$, there is at least one point in $\varphi(\psi(\p E)\cap U)$. This implies that $\pi(\varphi(\psi(\p E)\cap U))$ contains a neighborhood of $e_1$ in $\sph^{n-1}$. So, $$\dim \varphi(\psi(\p E)\cap U)\geq n-1 $$ by Theorem \ref{thm-dim}. This completes the proof of the lemma.
\end{proof}

We are now ready to prove Theorem \ref{thm-main}.
\begin{proof}[Proof of Theorem \ref{thm-main}] If $f$ has a regular point $p$, then $f(p)$ is an interior point of $f(M)$ by the local submersion theorem (see \cite{GP,Z}). By Lemma \ref{lem-key}, $\dim \p f(M)\geq n-1$. By Theorem \ref{thm-dim}, $\dim f^{-1}(\p f(M))\geq n-1$. Note that every point in $f^{-1}(\p f(M))$ is a critical point of $f$ by the local submersion theorem again. We complete the proof of the theorem.
\end{proof}
Finally, we come to prove Theorem \ref{thm-surj-holo-1}.
\begin{proof}
Suppose that $f(X)\neq Y$. Because $Y$ is irreducible, the smooth part $Y^*$ of $Y$ is connected (see \cite{GH}). By that $f$ has a regular point and $f(X)$ is closed in $Y$, $f(X)\cap Y^*$ is a proper closed subset of $Y^*$ with nonempty interior. So, by Lemma \ref{lem-key}, $\dim \p(f(X)\cap Y^*)\geq 2n-1$. Then,
\begin{equation}
\dim f^{-1}(\p(f(X)\cap Y^*))\geq 2n-1.
\end{equation}
Note that the singular part $\Sing X$ of $X$ as a proper subvariety must have
\begin{equation}
\dim \Sing X\leq 2n-2.
\end{equation}
Therefore,
\begin{equation}
\dim f^{-1}(\p(f(X)\cap Y^*))\cap X^*=\dim f^{-1}(\p(f(X)\cap Y^*))\setminus\Sing X\geq 2n-1.
\end{equation}
Note that any point in $f^{-1}(\p(f(X)\cap Y^*))\cap X^*$ is a critical point of $f:X^*\cap f^{-1}(Y^*)\to Y^*$ by the local submersion theorem. However, the collection of all critical points of $f:X^*\cap f^{-1}(Y^*)\to Y^*$ as a proper subvariety of $X^*\cap f^{-1}(Y^*)$ must be of dimension no more than $2n-2$. This is a contradiction. So $f$ must be surjective.
\end{proof}


\begin{thebibliography}{99}

\bibitem{D} Demailly Jean-Pierre. {\it Complex analytic and differential geometry.} https://www-fourier.ujf-grenoble.fr/~demailly/manuscripts/agbook.pdf
\bibitem{E}Evans Lawrence, Gariepy Ronald.{\it  Measure theory and fine properties of functions.} Revised edition. Textbooks in Mathematics. CRC Press, Boca Raton, FL, 2015. xiv+299 pp.
\bibitem{GH}Griffiths Phillip, Harris Joseph. {\it Principles of algebraic geometry.} Reprint of the 1978 original. Wiley Classics Library. John Wiley \& Sons, Inc., New York, 1994. xiv+813 pp.
\bibitem{GP}Guillemin Victor, Pollack Alan. {\it Differential topology.} Prentice-Hall, Inc., Englewood Cliffs, N.J., 1974. xvi+222 pp.
\bibitem{LL}Liu Peng, Liu Shibo. {\it On surjectivity of smooth maps into Euclidean spaces and the fundamental theorem of algebra.} To appear in Americam mathematical Monthly. arXiv:1706.07281.
\bibitem{W}Whitney Hassler. {\it Complex analytic varieties.} Addison-Wesley Publishing Co., Reading, Mass.-London-Don Mills, Ont., 1972. xii+399 pp.
\bibitem{Z}Zhang Zhusheng. {\it Lectures on differential topology.}(Chinese) Peking University Press.

\end{thebibliography}
\end{document}